\documentclass[11pt,a4paper]{article}

\usepackage[T1]{fontenc}
\usepackage{lmodern}

\usepackage[margin=24mm]{geometry}
\usepackage{amsmath,amssymb,amsthm,mathtools}
\usepackage{booktabs,array}
\usepackage{enumitem}
\usepackage{hyperref}
\hypersetup{colorlinks=true,linkcolor=black,citecolor=black,urlcolor=blue}
\newtheorem{theorem}{Theorem}[section]
\newtheorem{proposition}[theorem]{Proposition}
\newtheorem{corollary}[theorem]{Corollary}
\newtheorem{lemma}[theorem]{Lemma}
\theoremstyle{definition}
\newtheorem{definition}[theorem]{Definition}

\theoremstyle{remark}
\newtheorem{remark}[theorem]{Remark}
\newcommand{\BC}{\operatorname{BC}}
\newcommand{\Hilb}{\operatorname{Hilb}}
\newcommand{\gr}{\operatorname{gr}}
\newcommand{\rk}{\operatorname{rk}}
\newcommand{\srg}{\operatorname{srg}}

\title{\bfseries The Negami Polynomial and Broken-Circuit Stanley--Reisner Rings\\
\large A Negami--Hilbert Correspondence, Its Relation to Prior Work, and a Rank-Filtered Realization of the Full Three-Variable Data}
\author{IWAO MIZUKAI\\
\normalsize Chiba Keizai University High School, Chiba, Japan\\
\normalsize \texttt{get\_mizukai@hotmail.com}}
\date{September 3, 2026 -- Revised manuscript}

\begin{document}
\maketitle

\begin{abstract}
We organize a direct connection between Seiya Negami's three-variable graph polynomial $f(G;t,x,y)$ and the Stanley--Reisner ring of the broken-circuit complex of the graphic matroid. For a connected loopless graph $G$, the chromatic specialization $f(G;q,-1,1)=P_G(q)$ together with Whitney's broken-circuit theorem yields
\[
 h_{\BC(G)}(z)=(-z)^r\left[\frac{f(G;q,-1,1)}{q}\right]_{q=(z-1)/z},\qquad r=|V(G)|-1.
\]
For brevity we call this explicit composite map the \emph{Negami--Hilbert correspondence}. The underlying chromatic/characteristic-polynomial-to-broken-circuit-Hilbert-series relation is classical, and no claim of novelty is made for that underlying identity. We place the formulation in the context of work of Negami, Oxley, Whitney, Brylawski--Oxley, Proudfoot--Speyer, Llamas--Mart\'{i}nez-Bernal--Merino, and Berget. We then derive closed or low-degree formulas for cycles $C_n$, wheels $W_n$, complete graphs $K_n$, complete bipartite graphs $K_{m,n}$, strongly regular graphs, large-girth regular graphs, and Ramanujan graphs. Finally, as a first nontrivial construction retaining the full three-variable Negami polynomial, we equip the squarefree edge algebra of the triangle $C_3$ with the graphic-matroid rank filtration and recover the complete polynomial $f(C_3;t,x,y)$ from the bigraded Hilbert polynomial of the associated graded algebra.
\end{abstract}

\noindent\textbf{Keywords:} Negami polynomial; chromatic polynomial; broken-circuit complex; Stanley--Reisner ring; Hilbert series; $h$-vector; Tutte polynomial; rank filtration; Ramanujan graph.\\
\textbf{MSC 2020:} 05C31, 05B35, 13F55, 05C50.

\section{Introduction}
Negami introduced a three-variable polynomial $f(G;t,x,y)$ defined by deletion and contraction and showed that its coefficients record the numbers of spanning subgraphs with prescribed numbers of edges and connected components\cite{Negami1987}. Oxley subsequently made explicit the close relation between Negami's polynomial invariants and the Tutte polynomial\cite{Oxley1989}. On the other hand, Whitney's broken-circuit theorem interprets the coefficients of the chromatic polynomial, or equivalently of the characteristic polynomial of the graphic matroid, as numbers of NBC sets\cite{Whitney1932}; the Stanley--Reisner ring of the broken-circuit complex then realizes its $h$-vector through a Hilbert series.

The first purpose of this paper is to write these standard maps as one explicit transformation starting from Negami's polynomial. The second is to apply that transformation uniformly to several infinite graph families. The third is to exhibit, in the first nontrivial example, an algebraic construction that retains the full three-variable information rather than only the chromatic specialization.

\section{Prior work and the terminology ``Negami--Hilbert correspondence''}
\subsection{Negami's polynomial and Tutte-type invariants}
Negami\cite{Negami1987} defined $f(G;t,x,y)$ by a deletion--contraction recursion. Oxley\cite{Oxley1989} showed that Negami's polynomial invariants are closely related to the Tutte polynomial. Negami and Kawagoe\cite{NegamiKawagoe1995} later gave a state-model reformulation. Thus the passage from the Negami polynomial to Tutte/characteristic/chromatic-polynomial data belongs to established theory.

\subsection{Broken circuits and Hilbert series}
Whitney's theorem identifies the coefficients of the characteristic polynomial of a graphic matroid with the numbers of no-broken-circuit sets. Brylawski--Oxley\cite{BrylawskiOxley1981} and Bj\"orner--Ziegler\cite{BjornerZiegler1991} studied structural and factorization properties of broken-circuit complexes. Proudfoot--Speyer\cite{ProudfootSpeyer2006} constructed, for representable matroids, a broken circuit ring admitting a Gr\"obner degeneration to the Stanley--Reisner ring of the broken-circuit complex. Llamas--Mart\'{i}nez-Bernal--Merino\cite{Llamas2010} explicitly use the relation among chromatic coefficients, broken-circuit $h$-vectors, and Hilbert series for graphs. Consequently, Theorem~\ref{thm:NH} below should be read as an explicit composition of established results with Negami's chromatic specialization placed at the front.

\subsection{Algebraic Hilbert realizations of Tutte data}
Characteristic-polynomial substitutions occur as Hilbert series of Orlik--Solomon type algebras. Berget\cite{Berget2010} proved that a doubly indexed decomposition of a space spanned by products of linear forms has Hilbert series given by the Tutte-polynomial evaluation $T(1+x,y)$. Hence the general philosophy of realizing Tutte or characteristic-polynomial information by Hilbert series has substantial precedent.

\subsection{Status of the terminology}
In the bibliographic search carried out for this manuscript, including the references above and searches combining ``Negami polynomial'', ``Hilbert series'', ``broken circuit'', and ``Stanley--Reisner'', we did not locate an established use of the exact phrase \emph{Negami--Hilbert correspondence}. This is not an exhaustive proof that the phrase has never appeared. We therefore use it only as a local label for the composite map
\[
 f(G;t,x,y)\longrightarrow f(G;q,-1,1)=P_G(q)\longrightarrow \chi_{M(G)}(q)\longrightarrow h_{\BC(G)}(z),
\]
and make no priority claim for the underlying identity.

\section{The Negami polynomial and the chromatic specialization}
For a finite graph $G=(V,E)$, Negami's polynomial is determined by $f(\overline K_n)=t^n$ and
\[
 f(G)=x f(G/e)+y f(G-e)
\]
for an edge $e$\cite{Negami1987}.

\begin{proposition}[Spanning-subgraph expansion]\label{prop:subgraph}
Let $k(A)$ denote the number of connected components of the spanning subgraph $(V,A)$. Then
\[
 \boxed{f(G;t,x,y)=\sum_{A\subseteq E}t^{k(A)}x^{|A|}y^{|E|-|A|}.}
\]
\end{proposition}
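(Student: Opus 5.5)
The plan is to define the right-hand side as a function
\[
 F(G)=\sum_{A\subseteq E}t^{k(A)}x^{|A|}y^{|E|-|A|}
\]
on finite graphs, allowing multigraphs with loops, since contraction can create them. We then check that $F$ satisfies the initial condition and the recursion that determine $f$. Induction on $|E|$ then gives $F=f$. Because $f$ is given by deletion--contraction with respect to an \emph{arbitrary} edge, it is enough to verify both identities for $F$. That $f$ is well defined is Negami's result, and in any case it follows a posteriori from the formula, since $F$ is manifestly independent of edge order.

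The base case is immediate. For $\overline K_n$ the only subset is $A=\emptyset$, with $k(\emptyset)=n$, so $F(\overline K_n)=t^n$.

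For the recursion, fix an edge $e$ and split the sum according to whether $e\in A$.

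\emph{Subsets not containing $e$.} These are exactly the subsets of $E(G-e)=E\setminus\{e\}$. The vertex set is unchanged, so $k_G(A)=k_{G-e}(A)$. The exponent of $y$ in $G$ is $|E|-|A|=(|E|-1-|A|)+1$. Hence this part of the sum equals $yF(G-e)$.

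\emph{Subsets containing $e$.} Write $A=B\cup\{e\}$ with $B\subseteq E(G/e)$, identifying the edges of $G/e$ with $E\setminus\{e\}$. We need $k_G(B\cup\{e\})=k_{G/e}(B)$. This holds because contracting $e$ merges its two endpoints, which already lie in the same component of $(V,B\cup\{e\})$. The contraction therefore induces a bijection between the components of $(V,B\cup\{e\})$ and those of $(V/e,B)$. The exponents also match: $x^{|A|}=x\cdot x^{|B|}$ and $|E|-|A|=|E(G/e)|-|B|$. So this part of the sum equals $xF(G/e)$.

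Adding the two parts gives $F(G)=xF(G/e)+yF(G-e)$. Since each step strictly reduces the number of edges, induction shows that $F$ and $f$ agree on every graph.

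The only point requiring care, and the main obstacle, is the component bijection under contraction. The cleanest approach is to observe that connectivity in $(V/e,B)$ is connectivity in $(V,B)$ modulo identifying $u\sim v$, where $e=uv$, and that adding $e$ to $B$ has exactly the same effect. If $e$ is a loop, then $G/e=G-e$ and the identity $k(B\cup\{e\})=k(B)$ is trivial, so the argument covers that case too, provided loops are handled consistently with Negami's convention.
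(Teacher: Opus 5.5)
Your proof is correct, but it takes a different route from the paper. The paper's proof is purely a citation: it invokes Negami's theorem that the coefficients of $f$ count spanning subgraphs by number of edges and number of components, and then reindexes spanning subgraphs by their edge sets $A\subseteq E$. You instead prove that coefficient interpretation directly from the defining recursion. You show that the subset sum $F$ satisfies $F(\overline K_n)=t^n$ and $F(G)=xF(G/e)+yF(G-e)$ for every edge $e$, splitting on whether $e\in A$. The one substantive step is the component identity $k_G(B\cup\{e\})=k_{G/e}(B)$. You then conclude $F=f$ by induction on $|E|$.

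What your route buys is self-containment. It also gives, as you note, the well-definedness of $f$ (independence of the order in which edges are processed) for free, since $F$ visibly involves no choices. Your explicit allowance of multigraphs with loops also surfaces a convention that the paper leaves implicit. The contraction $G/e$ must retain parallel edges and loops, so that $|E(G/e)|=|E|-1$. If contraction merged parallel edges, the $y$-exponent bookkeeping in your second case would break and the identity would fail, already for $C_3$. The paper's route is shorter but rests entirely on the external reference for exactly the statement you prove.
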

\begin{proof}
Negami's coefficient interpretation counts spanning subgraphs according to their number of edges and connected components\cite{Negami1987}. Indexing each spanning subgraph by its edge set $A\subseteq E$ gives the formula.
\end{proof}

\begin{corollary}[Chromatic specialization]\label{cor:chromatic}
For every finite graph $G$,
\[
 \boxed{f(G;q,-1,1)=P_G(q).}
\]
\end{corollary}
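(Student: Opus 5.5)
We substitute $t=q$, $x=-1$, $y=1$ into the spanning-subgraph expansion of Proposition~\ref{prop:subgraph}. This gives
\[
 f(G;q,-1,1)=\sum_{A\subseteq E}(-1)^{|A|}q^{k(A)}.
\]
The right-hand side is Whitney's subgraph expansion of the chromatic polynomial. So the plan is simply to prove that expansion, using inclusion--exclusion over the set of monochromatic edges.

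Fix $q\in\mathbb{Z}_{\ge 1}$. Consider all $q^{|V|}$ maps $c\colon V\to\{1,\dots,q\}$. For each edge $e=uv$, let $B_e$ be the set of colorings with $c(u)=c(v)$. Then $P_G(q)$ counts the colorings lying in no $B_e$.

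By inclusion--exclusion,
\[
 P_G(q)=\sum_{A\subseteq E}(-1)^{|A|}\Bigl|\bigcap_{e\in A}B_e\Bigr|,
\]
where the empty intersection is the set of all colorings. A coloring lies in $\bigcap_{e\in A}B_e$ exactly when it is constant on each connected component of $(V,A)$. Hence this intersection has exactly $q^{k(A)}$ elements.

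This proves the identity for every positive integer $q$. Both sides are polynomials in $q$, so they agree identically.

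Loops need a brief check, since the corollary is stated for every finite graph. If $G$ has a loop $e$, then $B_e$ consists of all colorings. The terms with and without $e$ then cancel in pairs, and both sides vanish, consistent with $P_G\equiv 0$.

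The main obstacle is one of conventions: whether Negami's recursion as normalized in the paper matches Proposition~\ref{prop:subgraph} exactly. In particular, the order of $G/e$ versus $G-e$ must be paired correctly with $x$ versus $y$, so that contraction carries weight $x$. Once Proposition~\ref{prop:subgraph} is taken as given, the argument above is routine.

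As an alternative check, one can verify that $g(G)=f(G;q,-1,1)$ satisfies $g(\overline K_n)=q^n$ and $g(G)=g(G-e)-g(G/e)$. This is the deletion--contraction rule for $P_G$, and induction on $|E|$ then finishes the proof.
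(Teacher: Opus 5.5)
Your proof is correct and follows the paper's route: you substitute $(t,x,y)=(q,-1,1)$ into the spanning-subgraph expansion and recognize $\sum_{A\subseteq E}(-1)^{|A|}q^{k(A)}$ as Whitney's expansion of $P_G(q)$. The only difference is that the paper cites Whitney's expansion, whereas you prove it by the standard inclusion--exclusion over monochromatic edges. Your convention worry is also moot, since the paper's recursion $f(G)=xf(G/e)+yf(G-e)$ does attach the weight $x$ to contraction, consistent with the expansion.
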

\begin{proof}
Substitution into Proposition~\ref{prop:subgraph} gives
\[
 f(G;q,-1,1)=\sum_{A\subseteq E}(-1)^{|A|}q^{k(A)},
\]
which is Whitney's spanning-subgraph expansion of the chromatic polynomial.
\end{proof}

\section{Broken-circuit Stanley--Reisner rings}
Let $G$ be connected and loopless, and fix a total ordering of $E(G)$. For each cycle $C$, delete its largest edge and call $C\setminus\{\max C\}$ a broken circuit. The family of edge sets containing no broken circuit is the simplicial complex $\Delta_G=\BC(G)$. Assigning a variable $x_e$ to each edge gives
\[
 I_{\BC(G)}=\left(\prod_{e\in C\setminus\{\max C\}}x_e: C\text{ a cycle of }G\right),
\]
\[
 \Bbbk[\BC(G)]=\Bbbk[x_e:e\in E(G)]/I_{\BC(G)}.
\]
For a connected graph, the graphic-matroid rank is $r=|V(G)|-1$, and $\dim\Bbbk[\BC(G)]=r$.

\section{The Negami--Hilbert correspondence}
\begin{theorem}[Negami--Hilbert correspondence: chromatic/NBC form]\label{thm:NH}
Let $G$ be a finite connected loopless graph and set $r=|V(G)|-1$. If $h_G(z)=\sum_{i=0}^{r}h_i z^i$ is the $h$-polynomial of $\BC(G)$, then
\[
 \boxed{h_G(z)=(-z)^r\left[\frac{f(G;q,-1,1)}{q}\right]_{q=(z-1)/z}.}
\]
Consequently,
\[
 \boxed{\Hilb(\Bbbk[\BC(G)];z)=\frac{h_G(z)}{(1-z)^r}.}
\]
\end{theorem}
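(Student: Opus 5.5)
Write $P_G(q)=\sum_{i=0}^{r}(-1)^i w_i\,q^{r+1-i}$, where $w_i\ge 0$. I will prove the two boxed identities in three steps and then carry out a short substitution computation.

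\emph{Step 1: the $f$-vector of $\BC(G)$.} Corollary~\ref{cor:chromatic} lets me replace $f(G;q,-1,1)$ by $P_G(q)$. Because $G$ is connected, $q\mid P_G(q)$ and $P_G(q)/q=\chi_{M(G)}(q)$ is the characteristic polynomial of the graphic matroid, of degree $r$. Whitney's broken-circuit theorem~\cite{Whitney1932} states that $w_i$ equals the number of $i$-element edge sets containing no broken circuit. These are exactly the faces of $\BC(G)$ with $i$ vertices, so $w_i=f_{i-1}(\BC(G))$, with $w_0=1$ coming from the empty face. Every NBC set is independent, which gives $w_i=0$ for $i>r$. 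The spanning tree that is lexicographically first for the reversed order is NBC, so $\dim\BC(G)=r-1$ and the Krull dimension is $r$, as stated in the previous section.

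\emph{Step 2: the Hilbert series.} I use the standard Stanley--Reisner face count, $\Hilb(\Bbbk[\Delta];z)=\sum_{i}f_{i-1}z^i/(1-z)^i$. The $h$-polynomial is defined by writing this series over the common denominator $(1-z)^r$, namely $h_G(z)=\sum_{i=0}^r w_i z^i(1-z)^{r-i}$. This definition gives the second boxed formula immediately, so the real content lies in the first.

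\emph{Step 3: the substitution.} I put $q=(z-1)/z$, which gives $1/q=z/(z-1)$. Then
\[
(-z)^r\frac{P_G(q)}{q}=(-z)^r\sum_{i=0}^r(-1)^i w_i\Bigl(\frac{z-1}{z}\Bigr)^{r-i}
=\sum_{i=0}^r(-1)^{r+i}w_i\,z^{i}(z-1)^{r-i}.
\]
Since $(-1)^{r+i}(z-1)^{r-i}=(-1)^{r+i}(-1)^{r-i}(1-z)^{r-i}=(1-z)^{r-i}$, the right-hand side is $\sum_i w_i z^i(1-z)^{r-i}=h_G(z)$, as required.

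The only step where I expect genuine care is Step 1. The signs and the indexing in Whitney's theorem have to match the face numbers exactly, including the shift caused by the factor $q$ coming from the single component. The independence of the result from the chosen edge ordering also needs a word; it holds because the $w_i$ are coefficients of $P_G$. The remaining steps are routine algebra.
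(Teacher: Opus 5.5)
Your proposal is correct and follows the same route as the paper: you use Corollary~\ref{cor:chromatic} to replace $f(G;q,-1,1)$ by $P_G(q)$, divide by $q$ to get $\chi_{M(G)}(q)$, identify the coefficients with NBC face numbers by Whitney's theorem, and substitute $q=(z-1)/z$ into $h_G(z)=\sum_i b_i z^i(1-z)^{r-i}$. You also fill in details the paper leaves implicit: the explicit sign check $(-1)^{r+i}(-1)^{r-i}=1$, the vanishing of $b_i$ for $i>r$, and the fact that the spanning tree built greedily from the largest edge down is NBC, which gives $\dim\Bbbk[\BC(G)]=r$.
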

\begin{proof}
By Corollary~\ref{cor:chromatic}, $f(G;q,-1,1)=P_G(q)$. Since $G$ is connected, $\chi_{M(G)}(q)=P_G(q)/q$. If $b_i$ is the number of $i$-edge NBC sets, Whitney's broken-circuit theorem gives
\[
 \chi_{M(G)}(q)=\sum_{i=0}^{r}(-1)^i b_iq^{r-i}.
\]
The $h$-polynomial is
\[
 h_G(z)=\sum_{i=0}^{r}b_i z^i(1-z)^{r-i}.
\]
Hence
\[
 (-z)^r\chi_{M(G)}\!\left(\frac{z-1}{z}\right)=h_G(z).
\]
Substituting $\chi_{M(G)}(q)=f(G;q,-1,1)/q$ proves the first formula, and the second is the standard Stanley--Reisner Hilbert-series formula.
\end{proof}

\begin{remark}
The theorem uses only the chromatic slice $x=-1,y=1$ of the full three-variable Negami polynomial. It should therefore be viewed as a direct Negami-polynomial formulation of the classical chromatic/NBC/Hilbert correspondence.
\end{remark}

\section{Infinite graph families}
\subsection{Cycles $C_n$}
Negami's formula is
\[
 f(C_n;t,x,y)=(x+ty)^n+(t-1)x^n\cite{Negami1987}.
\]
\begin{theorem}\label{thm:Cn}
For $n\ge3$, with edge order $e_1<\cdots<e_n$,
\[
 I_{\BC(C_n)}=(x_1x_2\cdots x_{n-1}),\qquad
 \Bbbk[\BC(C_n)]=\frac{\Bbbk[x_1,\ldots,x_n]}{(x_1\cdots x_{n-1})},
\]
and
\[
 \boxed{h_{C_n}(z)=1+z+\cdots+z^{n-2}},\qquad
 \boxed{\Hilb=\frac{1+z+\cdots+z^{n-2}}{(1-z)^{n-1}}}.
\]
\end{theorem}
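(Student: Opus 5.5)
The cycle $C_n$ has exactly one cycle, namely its full edge set $\{e_1,\dots,e_n\}$. With the order $e_1<\cdots<e_n$ its largest edge is $e_n$, so the unique broken circuit is $\{e_1,\dots,e_{n-1}\}$. The definition of $I_{\BC(G)}$ in the broken-circuit section then gives $I_{\BC(C_n)}=(x_1\cdots x_{n-1})$ directly, and the displayed quotient follows. As a sanity check, the faces of $\BC(C_n)$ are the subsets of $E$ not containing $\{e_1,\dots,e_{n-1}\}$. So $\BC(C_n)$ is the join of the boundary of the $(n-2)$-simplex on $e_1,\dots,e_{n-1}$ with the vertex $e_n$. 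It has dimension $n-2$, which matches $\dim \Bbbk[\BC(C_n)]=r=n-1$.

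For the $h$-polynomial, I would use Theorem~\ref{thm:NH} with Negami's closed form $f(C_n;t,x,y)=(x+ty)^n+(t-1)x^n$. Specializing at $x=-1$, $y=1$ gives $f(C_n;q,-1,1)=(q-1)^n+(-1)^n(q-1)$, so
\[
\frac{f(C_n;q,-1,1)}{q}=\frac{(q-1)\bigl((q-1)^{n-1}+(-1)^n\bigr)}{q}.
\]
Now substitute $q=(z-1)/z$, so that $q-1=-1/z$. The numerator becomes $(-1/z)\bigl((-1)^{n-1}z^{-(n-1)}+(-1)^n\bigr)=(-1)^n z^{-n}(1-z^{n-1})$. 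Dividing by $q=(z-1)/z$ gives $(-1)^n z^{-(n-1)}(1-z^{n-1})/(z-1)$. Multiplying by $(-z)^{r}=(-1)^{n-1}z^{n-1}$ yields $-(1-z^{n-1})/(z-1)=(1-z^{n-1})/(1-z)=1+z+\cdots+z^{n-2}$. The sign bookkeeping in this substitution is the only step needing care, so I would cross-check it independently.

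For that cross-check I would count NBC sets directly. Every subset of $E$ other than the supersets of $\{e_1,\dots,e_{n-1}\}$ is NBC, so $b_i=\binom{n}{i}$ for $i\le n-2$ and $b_{n-1}=\binom{n}{n-1}-1=n-1$. The formula $h(z)=\sum_i b_i z^i(1-z)^{r-i}$ can then be checked against the claim. A simpler route is to note that $\Bbbk[x_1,\dots,x_n]/(m)$, with $m$ a monomial of degree $n-1$, is a hypersurface ring. Its Hilbert series is therefore $(1-z^{n-1})/(1-z)^n=(1+\cdots+z^{n-2})/(1-z)^{n-1}$. This gives the boxed Hilbert series at once and confirms $h_{C_n}$ via the second formula of Theorem~\ref{thm:NH}.
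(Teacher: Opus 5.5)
Your proposal is correct and follows the paper's proof: you identify $\{e_1,\dots,e_{n-1}\}$ as the unique broken circuit and then apply Theorem~\ref{thm:NH} to $f(C_n;q,-1,1)=(q-1)^n+(-1)^n(q-1)$, and your explicit substitution $q-1=-1/z$ is carried out correctly. Your hypersurface cross-check, $\Hilb=(1-z^{n-1})/(1-z)^n$, is an independent confirmation that the paper does not include, and on its own it establishes both boxed formulas without using Theorem~\ref{thm:NH}.
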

\begin{proof}
The unique cycle yields the unique minimal broken circuit after deleting $e_n$. The Hilbert formula follows by applying Theorem~\ref{thm:NH} to $P_{C_n}(q)=(q-1)^n+(-1)^n(q-1)$.
\end{proof}

\subsection{Wheels $W_n=K_1+C_n$}
\begin{theorem}\label{thm:Wn}
For $n\ge3$,
\[
 \boxed{h_{W_n}(z)=(1+z)^n-z^{n-1}-z^n},
\]
\[
 \boxed{\Hilb(\Bbbk[\BC(W_n)];z)=\frac{(1+z)^n-z^{n-1}-z^n}{(1-z)^n}}.
\]
\end{theorem}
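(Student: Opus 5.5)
The plan is to obtain $P_{W_n}$ from the cycle formula and then substitute it into Theorem~\ref{thm:NH}, instead of describing $\BC(W_n)$ directly. The wheel $W_n=K_1+C_n$ is connected and loopless on $n+1$ vertices, so $r=n$.

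First I will compute $f(W_n;q,-1,1)=P_{W_n}(q)$ by Corollary~\ref{cor:chromatic}. Colour the hub first, in $q$ ways. Every rim vertex is adjacent to the hub, so the rim must then be properly coloured from the remaining $q-1$ colours. This gives $P_{W_n}(q)=q\,P_{C_n}(q-1)$. Using the cycle formula from the proof of Theorem~\ref{thm:Cn}, $P_{C_n}(q)=(q-1)^n+(-1)^n(q-1)$, we get
\[
 \frac{f(W_n;q,-1,1)}{q}=(q-2)^n+(-1)^n(q-2).
\]
If one wants to stay strictly inside Negami's framework, this can also be checked against Proposition~\ref{prop:subgraph}. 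The coloring argument is the cleanest route, though.

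Next I substitute $q=(z-1)/z$, which gives $q-2=-(1+z)/z$. Multiplying by $(-z)^n=(-1)^nz^n$ turns each term into a polynomial.

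The first term becomes
\[
 (-1)^nz^n\cdot(-1)^n(1+z)^n/z^n=(1+z)^n.
\]
The second term becomes
\[
 (-1)^nz^n\cdot(-1)^n\cdot\bigl(-(1+z)/z\bigr)=-z^{n-1}(1+z)=-z^{n-1}-z^n.
\]
Adding the two gives the boxed $h$-polynomial. The Hilbert series then follows from the second part of Theorem~\ref{thm:NH} with $r=n$.

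The only real point needing care is the chromatic identity $P_{W_n}(q)=q\,P_{C_n}(q-1)$, which is the standard join/cone property. I will justify it with the hub-first coloring count above. The rest is sign bookkeeping in the substitution, and I will confirm it numerically for $n=3$. There $W_3=K_4$, the formula gives $h=(1+z)^3-z^2-z^3=1+3z+2z^2$, and this matches $P_{K_4}(q)/q=(q-1)(q-2)(q-3)$, whose NBC counts are $b=(1,6,11,6)$.
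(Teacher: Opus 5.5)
Your proposal is correct and follows the paper's proof. The paper also colours the hub first to obtain $P_{W_n}(q)=q\bigl((q-2)^n+(-1)^n(q-2)\bigr)$, divides by $q$, and substitutes $q=(z-1)/z$ into Theorem~\ref{thm:NH} with $r=n$; your sign bookkeeping via $q-2=-(1+z)/z$ and your $n=3$ check against $K_4$ are accurate details the paper leaves implicit.
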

\begin{proof}
Coloring the hub first gives
\[
 P_{W_n}(q)=q\bigl((q-2)^n+(-1)^n(q-2)\bigr).
\]
Thus $\chi_{W_n}(q)=(q-2)^n+(-1)^n(q-2)$, and Theorem~\ref{thm:NH} gives the result.
\end{proof}

\subsection{Complete graphs $K_n$}
\begin{theorem}\label{thm:Kn}
For $n\ge2$,
\[
 \boxed{h_{K_n}(z)=\prod_{j=1}^{n-2}(1+jz)},
\qquad
 \boxed{\Hilb=\frac{\prod_{j=1}^{n-2}(1+jz)}{(1-z)^{n-1}}}.
\]
In particular,
\[
 h_i=\left[{n-1\atop n-1-i}\right]\qquad(0\le i\le n-2)
\]
is an unsigned Stirling number of the first kind.
\end{theorem}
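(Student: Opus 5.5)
We apply Theorem~\ref{thm:NH} to the explicit chromatic polynomial of $K_n$, exactly as was done for $C_n$ and $W_n$. Coloring the vertices one at a time gives
\[
P_{K_n}(q)=q(q-1)(q-2)\cdots(q-n+1).
\]
By Corollary~\ref{cor:chromatic}, this equals $f(K_n;q,-1,1)$. Since $K_n$ is connected with $r=n-1$, dividing by $q$ gives
\[
\chi_{M(K_n)}(q)=\prod_{j=1}^{n-1}(q-j).
\]

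The computation is now a substitution. With $q=(z-1)/z$, each factor becomes
\[
q-j=\frac{z-1-jz}{z}=-\frac{1+(j-1)z}{z}.
\]
There are $n-1$ factors, so
\[
(-z)^{n-1}\chi_{M(K_n)}\!\left(\frac{z-1}{z}\right)=\prod_{j=1}^{n-1}\bigl(1+(j-1)z\bigr).
\]
Reindexing with $j-1\mapsto j$ turns the right-hand side into $\prod_{j=0}^{n-2}(1+jz)$. The $j=0$ factor equals $1$, which leaves $\prod_{j=1}^{n-2}(1+jz)$. The Hilbert series then follows from the second boxed formula of Theorem~\ref{thm:NH} with $r=n-1$. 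The degenerate case $n=2$ gives the empty product $h=1$ and the Hilbert series $1/(1-z)$, which is consistent with a single edge and no broken circuits.

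For the Stirling statement we use the standard generating function
\[
\prod_{j=0}^{m-1}(x+j)=\sum_k \left[{m\atop k}\right]x^k
\]
with $m=n-1$. The factor $j=0$ contributes $x$, so
\[
\prod_{j=1}^{n-2}(x+j)=\sum_k \left[{n-1\atop k}\right]x^{k-1}.
\]
Reversing this polynomial, namely multiplying by $z^{n-2}$ and setting $x=1/z$, gives
\[
\prod_{j=1}^{n-2}(1+jz)=\sum_k \left[{n-1\atop k}\right]z^{n-1-k}.
\]
Setting $i=n-1-k$ yields $h_i=\left[{n-1\atop n-1-i}\right]$ for $0\le i\le n-2$.

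The only step needing care is the sign and index bookkeeping in the substitution. We must check that $(-z)^{n-1}$ exactly cancels the $n-1$ factors of $-1/z$, and that the reversal of the Stirling generating function lands on the index $n-1-i$ rather than $i$. As a sanity check, the case $K_3=C_3$ gives $1+z$, which agrees with Theorem~\ref{thm:Cn}.
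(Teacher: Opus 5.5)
Your proof is correct and is essentially the paper's argument: apply Theorem~\ref{thm:NH} to $\chi_{M(K_n)}(q)=\prod_{j=1}^{n-1}(q-j)$, use $q-j=-(1+(j-1)z)/z$ at $q=(z-1)/z$, and discard the trivial $j=0$ factor after reindexing. Your derivation of $h_i=\left[{n-1\atop n-1-i}\right]$ from the rising-factorial generating function fills in a step the paper asserts without proof, and your index bookkeeping there is right.
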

\begin{proof}
Since $P_{K_n}(q)=q(q-1)\cdots(q-n+1)$,
$\chi_{K_n}(q)=\prod_{j=1}^{n-1}(q-j)$. Applying Theorem~\ref{thm:NH} gives
\[
 h_{K_n}(z)=\prod_{j=1}^{n-1}(1+(j-1)z)=\prod_{j=1}^{n-2}(1+jz).
\]
\end{proof}

\subsection{Complete bipartite graphs $K_{m,n}$}
Write $\left\{{m\atop k}\right\}$ for a Stirling number of the second kind.
\begin{theorem}\label{thm:Kmn}
For $m,n\ge1$,
\[
 \boxed{
 h_{K_{m,n}}(z)=\sum_{k=1}^{m}(-1)^{m+k}\left\{{m\atop k}\right\}
 z^{m-k}\left(\prod_{a=1}^{k-2}(1+az)\right)(1+(k-1)z)^n
 }
\]
with the empty product interpreted as $1$, and
\[
 \Hilb(\Bbbk[\BC(K_{m,n})];z)=\frac{h_{K_{m,n}}(z)}{(1-z)^{m+n-1}}.
\]
\end{theorem}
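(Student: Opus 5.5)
Following the pattern of Theorems~\ref{thm:Cn}--\ref{thm:Kn}, the plan is to compute the chromatic polynomial of $K_{m,n}$ in closed form and then apply Theorem~\ref{thm:NH} term by term. The chromatic polynomial comes from partitioning the $m$-side. Consider a proper $q$-colouring. The vertices on the $m$-side receive some set of colours, and this induces a partition of the $m$ vertices into $k$ nonempty colour classes, which can happen in $\left\{{m\atop k}\right\}$ ways. Those $k$ classes receive distinct colours in $q(q-1)\cdots(q-k+1)$ ways. Each of the $n$ vertices on the other side must then avoid exactly these $k$ colours, giving $(q-k)^n$ choices. Hence
\[
P_{K_{m,n}}(q)=\sum_{k=1}^{m}\left\{{m\atop k}\right\}\,q(q-1)\cdots(q-k+1)\,(q-k)^n .
\]
Dividing by $q$ gives $\chi(q)=\sum_k \left\{{m\atop k}\right\}\prod_{a=1}^{k-1}(q-a)\,(q-k)^n$. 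By Corollary~\ref{cor:chromatic} this equals $f(K_{m,n};q,-1,1)/q$.

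Next we substitute $q=(z-1)/z$ and multiply by $(-z)^r$, where $r=m+n-1$. The $k$-th summand has degree $(k-1)+n$ in $q$. We split the prefactor as
\[
(-z)^{r}=(-z)^{m-k}\,(-z)^{k-1}\,(-z)^{n}.
\]
We then distribute the factors as follows. For each $a$ with $1\le a\le k-1$, one factor of $-z$ goes to $(q-a)$:
\[
-z\bigl((z-1)/z-a\bigr)=1+(a-1)z .
\]
Taking $a=1,\dots,k-1$ gives $1\cdot\prod_{a=1}^{k-2}(1+az)$; the shift of index is exactly the one in the proof of Theorem~\ref{thm:Kn}. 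Each factor $(q-k)$ absorbs one $-z$:
\[
-z\bigl((z-1)/z-k\bigr)=1+(k-1)z ,
\]
so the $n$ copies give $(1+(k-1)z)^n$. The leftover factor is $(-z)^{m-k}=(-1)^{m-k}z^{m-k}=(-1)^{m+k}z^{m-k}$, which matches the boxed sign and power.

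The Hilbert series then follows from the second formula of Theorem~\ref{thm:NH}, using $r=m+n-1$ for the connected graph $K_{m,n}$. The main obstacle is purely bookkeeping: the summands have different $q$-degrees, so each needs its own share of the $(-z)^r$ prefactor. I would check the result on $K_{1,n}$ (a tree), where it should give $h=1$, and on $K_{2,2}=C_4$, where it should give $1+z+z^2$, in agreement with Theorem~\ref{thm:Cn}. For the colouring count itself, the only step to verify is that every proper colouring arises exactly once from a set partition of the $m$-side together with an injective colouring of its blocks, which is immediate.
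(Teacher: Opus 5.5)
Your proposal is correct and follows the paper's own argument. You use the same decomposition $P_{K_{m,n}}(q)=\sum_{k}\left\{{m\atop k}\right\}(q)_k(q-k)^n$ by the number of colours on the $m$-side, then divide by $q$ and substitute $q=(z-1)/z$ via Theorem~\ref{thm:NH}. Your factor-by-factor splitting $(-z)^r=(-z)^{m-k}(-z)^{k-1}(-z)^n$ makes explicit the bookkeeping the paper leaves implicit, and it checks out, including the sign $(-1)^{m-k}=(-1)^{m+k}$ and the index shift to $\prod_{a=1}^{k-2}(1+az)$.
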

\begin{proof}
Counting colorings that use exactly $k$ colors on one part gives
\[
 P_{K_{m,n}}(q)=\sum_{k=1}^{m}\left\{{m\atop k}\right\}(q)_k(q-k)^n.
\]
Divide by $q$, substitute $q=(z-1)/z$, and apply Theorem~\ref{thm:NH}.
\end{proof}

\section{Strongly regular graphs}
Let $G=\srg(v,k,\lambda,\mu)$ be a nontrivial strongly regular graph. Then $m=vk/2$, $r=v-1$, and its cycle rank is
\[
 \beta=m-r=\frac{vk}{2}-v+1.
\]
\begin{theorem}[Low-degree $h$-coefficients for strongly regular graphs]\label{thm:srg}
One has
\[
 \boxed{h_0=1},\qquad
 \boxed{h_1=\beta=\frac{vk}{2}-v+1},
\]
\[
 \boxed{h_2=\binom{\beta+1}{2}-\frac{vk\lambda}{6}}.
\]
Moreover,
\[
 \dim\Bbbk[\BC(G)]=v-1,\qquad
 \operatorname{codim}\Bbbk[\BC(G)]=\beta.
\]
\end{theorem}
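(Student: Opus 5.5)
Our plan is to read off the low-degree coefficients by expanding the identity $h_G(z)=\sum_{i=0}^{r}b_i z^i(1-z)^{r-i}$ from the proof of Theorem~\ref{thm:NH}, where $b_i$ is the number of $i$-edge NBC sets. Collecting powers of $z$ gives
\[
 h_0=b_0,\qquad h_1=b_1-r\,b_0,\qquad h_2=b_2-(r-1)b_1+\binom{r}{2}b_0 .
\]
It therefore suffices to compute $b_0,b_1,b_2$ for $G=\srg(v,k,\lambda,\mu)$, with $m=vk/2$ edges and $r=v-1$.

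The values of $b_i$ follow from the broken-circuit definition of Section~4. The empty set is NBC, so $b_0=1$, and hence $h_0=1$. A single edge $e$ fails to be NBC only if $\{e\}$ is a broken circuit, which would require a $2$-cycle; $G$ is simple, so $b_1=m$. A $2$-subset is a broken circuit exactly when it is $C\setminus\{\max C\}$ for a triangle $C$. Each triangle produces exactly one such pair, and distinct triangles produce distinct pairs, since the pair determines the third edge. Hence
\[
 b_2=\binom{m}{2}-T,
\]
where $T$ is the number of triangles. In an $\srg$, each edge lies in exactly $\lambda$ triangles, so double counting edge--triangle incidences gives $3T=m\lambda$, that is, $T=vk\lambda/6$.

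Substituting, $h_1=m-r=\beta$. For $h_2$ we get
\[
 h_2=\binom{m}{2}-(r-1)m+\binom{r}{2}-T .
\]
Writing $m=\beta+r$, the quantity $\binom{m}{2}-(r-1)m+\binom{r}{2}$ simplifies to $\binom{\beta+1}{2}$. The check is
\[
 \binom{\beta+r}{2}+\binom{r}{2}-(r-1)(\beta+r)=\frac{\beta^2+\beta}{2}.
\]
This gives $h_2=\binom{\beta+1}{2}-vk\lambda/6$. As a cross-check, the same values can be read off from $P_G(q)$ via Theorem~\ref{thm:NH}, using the standard facts that the $q^{v-1}$ coefficient of $P_G$ is $-m$ and the $q^{v-2}$ coefficient is $\binom{m}{2}-T$.

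For the last claims, $\dim\Bbbk[\BC(G)]=r=v-1$ was already noted in Section~4. This uses that $\BC(G)$ is pure of dimension $r-1$, because every basis contains an NBC basis. The codimension is the number of variables minus the dimension, namely $m-(v-1)=\beta$.

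The only point needing care is the bijection between triangles and broken circuits of size two. It relies on simplicity of $G$, so that there are no $2$-cycles, and on the fact that $2$-element broken circuits can only come from triangles. Everything else is routine algebra.
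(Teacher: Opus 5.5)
Your proposal is correct and follows the paper's proof: compute $b_0=1$, $b_1=m$, $b_2=\binom{m}{2}-T$ with $3T=m\lambda$, then apply the $f$--$h$ transform $h_1=b_1-r$, $h_2=b_2-(r-1)b_1+\binom{r}{2}$, and you also check the binomial simplification and the triangle/broken-circuit bijection that the paper leaves implicit. One small wording slip: ``every basis contains an NBC basis'' does not make sense as written; for the Krull dimension you only need that NBC sets are independent and that some NBC basis exists (for example, the spanning tree chosen greedily from the largest edge downward), so invoking purity is more than is required.
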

\begin{proof}
Let $b_i$ be the NBC face numbers. Then $b_0=1$ and $b_1=m$. A two-edge set is a broken circuit precisely when it is obtained by deleting the largest edge from a triangle. Since every edge lies in exactly $\lambda$ triangles, the number of triangles is
\[
 \tau=\frac{m\lambda}{3}=\frac{vk\lambda}{6}.
\]
Thus $b_2=\binom m2-\tau$. The $f$--$h$ transform gives
\[
 h_1=b_1-r=\beta,
\qquad
 h_2=b_2-(r-1)b_1+\binom r2=\binom{\beta+1}{2}-\tau.
\]
\end{proof}

\section{Girth and Ramanujan graphs}
\begin{theorem}[Girth and the initial $h$-vector]\label{thm:girth}
Let $G$ be a simple connected graph of girth $g$ and cycle rank $\beta=|E|-|V|+1$. Then
\[
 \boxed{h_j=\binom{\beta+j-1}{j}\qquad(0\le j\le g-2).}
\]
If $c_g(G)$ denotes the number of simple cycles of length $g$, then
\[
 \boxed{h_{g-1}=\binom{\beta+g-2}{g-1}-c_g(G).}
\]
\end{theorem}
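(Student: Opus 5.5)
We work with the NBC face numbers $b_i$, the number of $i$-edge sets containing no broken circuit, and pass to $h_j$ by the $f$--$h$ transform implicit in the proof of Theorem~\ref{thm:NH}. Comparing coefficients in
\[
\sum_j h_j z^j=\sum_i b_i z^i(1-z)^{r-i}
\]
gives
\[
h_j=\sum_{i=0}^{j}(-1)^{j-i}\binom{r-i}{j-i}b_i .
\]
The plan is therefore to compute $b_i$ exactly for $i\le g-1$. We then check that the resulting alternating sum collapses to $\binom{\beta+j-1}{j}$, with a correction at $j=g-1$.

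\textbf{Step 1: broken circuits of small size.} Every cycle has length at least $g$, so every broken circuit has at least $g-1$ edges. An $i$-subset with $i\le g-2$ therefore contains no broken circuit, and $b_i=\binom{m}{i}$ with $m=|E|=r+\beta$. For $i=g-1$, an $i$-set contains a broken circuit only if it \emph{is} a broken circuit $C\setminus\{\max C\}$ of some $g$-cycle $C$. The map $C\mapsto C\setminus\{\max C\}$ is injective on $g$-cycles. Indeed, $C\setminus\{\max C\}$ is a path whose endpoints determine the missing edge uniquely, since $G$ is simple. Hence $b_{g-1}=\binom{m}{g-1}-c_g(G)$.

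\textbf{Step 2: the binomial collapse.} With $b_i=\binom{r+\beta}{i}$ for all $i\le j$, we must show
\[
\sum_{i=0}^{j}(-1)^{j-i}\binom{r-i}{j-i}\binom{r+\beta}{i}=\binom{\beta+j-1}{j}.
\]
The cleanest route is generating functions. If $b_i=\binom{m}{i}$ held for all $i$, then
\[
\sum_i b_i z^i(1-z)^{r-i}=(1-z)^r\Bigl(1+\tfrac{z}{1-z}\Bigr)^{m}=(1-z)^{r-m}=(1-z)^{-\beta}.
\]
The coefficient of $z^j$ in $(1-z)^{-\beta}$ is $\binom{\beta+j-1}{j}$. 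The coefficient of $z^j$ depends only on $b_0,\dots,b_j$, so the identity holds for $j\le g-2$. The term $b_i z^i(1-z)^{r-i}$ is meaningful for $i\le r$, and the formal identity remains valid as a polynomial identity truncated at degree $j$. Alternatively, one may verify the sum as a Vandermonde-type convolution using $\binom{-\beta}{j}$.

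\textbf{Step 3: the correction at $j=g-1$.} Here $b_{g-1}$ differs from $\binom{m}{g-1}$ by $-c_g$. That deficit enters $h_{g-1}$ with coefficient $(-1)^0\binom{r-(g-1)}{0}=1$, which gives $h_{g-1}=\binom{\beta+g-2}{g-1}-c_g(G)$.

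The main obstacle is bookkeeping at the edges of the ranges. The indices must satisfy $j\le r$, which holds whenever a $g$-cycle exists since $g-1\le r$. The acyclic case $g=\infty$ should be read with the convention that the claims hold for all $j$ with $\beta=0$. The injectivity in Step 1 should also be stated carefully, because it is the one place where simplicity of $G$ is used.
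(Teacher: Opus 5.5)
Your proposal is correct and follows the paper's proof: set $b_i=\binom{|E|}{i}$ for $i\le g-2$ and $b_{g-1}=\binom{|E|}{g-1}-c_g(G)$, then push these through the $f$--$h$ transform $h_j=\sum_{i}(-1)^{j-i}\binom{r-i}{j-i}b_i$. Your generating-function evaluation supplies the ``Vandermonde-type identity'' that the paper only names; it is cleanest stated in $\Bbbk[[z]]$, where $(1-z)^{r-i}$ is defined even for $i>r$. Your injectivity argument using simplicity makes explicit a step the paper uses tacitly.
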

\begin{proof}
For $i\le g-2$, every $i$-edge set is NBC, so $b_i=\binom{|E|}{i}$. Substitution into
\[
 h_j=\sum_{i=0}^{j}(-1)^{j-i}\binom{r-i}{j-i}b_i,
\qquad r=|V|-1,
\]
and $|E|=r+\beta$ yields the first formula by a Vandermonde-type identity. At $j=g-1$, minimal broken circuits are obtained by deleting the largest edge from a shortest cycle, hence
\[
 b_{g-1}=\binom{|E|}{g-1}-c_g(G),
\]
which yields the second formula.
\end{proof}

\begin{corollary}[Large-girth regular families]\label{cor:largegirth}
Let $G_N$ be a family of $d$-regular graphs with fixed $d\ge3$, and fix $j$ such that eventually $g(G_N)\ge j+2$. Then
\[
 \beta_N=\left(\frac d2-1\right)N+1,
\qquad
 h_j(G_N)=\binom{\beta_N+j-1}{j},
\]
and hence
\[
 h_j(G_N)\sim \frac1{j!}\left(\frac{d-2}{2}N\right)^j.
\]
This applies in particular to large-girth Ramanujan families.
\end{corollary}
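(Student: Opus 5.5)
The corollary follows from Theorem~\ref{thm:girth} together with two elementary computations: the cycle rank of a regular graph, and the asymptotics of a binomial coefficient in which only the top entry grows. I will assume, as is implicit in the statement, that $G_N$ has $N$ vertices, is connected (so that Theorem~\ref{thm:girth} applies and the graphic rank is $N-1$), and is simple.

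\textbf{Cycle rank.} By the handshake lemma a $d$-regular graph on $N$ vertices has $|E|=dN/2$ edges. Hence
\[
 \beta_N=|E|-|V|+1=\frac{dN}{2}-N+1=\left(\frac d2-1\right)N+1 .
\]

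\textbf{Exact formula for $h_j$.} For $N$ large enough we have $g(G_N)\ge j+2$, that is, $j\le g(G_N)-2$. So $j$ lies in the range covered by the first boxed formula of Theorem~\ref{thm:girth}, which gives
\[
 h_j(G_N)=\binom{\beta_N+j-1}{j}.
\]
The case $j=0$ is trivial, and the hypothesis makes no additional demand there.

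\textbf{Asymptotics.} With $j$ fixed, $\binom{\beta+j-1}{j}$ is a polynomial in $\beta$ of degree $j$ with leading coefficient $1/j!$, namely $\frac{1}{j!}\prod_{i=0}^{j-1}(\beta+i)$. Since $d\ge3$, the slope $\frac d2-1$ is positive, so $\beta_N\to\infty$ and $\beta_N\sim\frac{d-2}{2}N$. Each factor $\beta_N+i$ is therefore asymptotic to $\frac{d-2}{2}N$, and multiplying the $j$ factors gives the stated asymptotic $h_j(G_N)\sim \frac1{j!}\left(\frac{d-2}{2}N\right)^j$.

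\textbf{Ramanujan families.} The final sentence is not a separate claim. It applies the argument above to any family of $d$-regular Ramanujan graphs whose girth tends to infinity, such as the Lubotzky--Phillips--Sarnak graphs, which have girth of order $\log N$. For such a family the hypothesis $g(G_N)\ge j+2$ holds eventually for every fixed $j$.

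There is no real obstacle here. The only points needing care are bookkeeping: that $N$ counts vertices, and that connectivity and simplicity are assumed so that Theorem~\ref{thm:girth} applies with $r=N-1$. Both should be stated explicitly in the proof.
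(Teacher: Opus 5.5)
Your proposal is correct and is essentially the derivation the paper leaves implicit: you apply Theorem~\ref{thm:girth} once $g(G_N)\ge j+2$, use the handshake count $|E|=dN/2$ to get $\beta_N$, and read off the leading term $\beta^j/j!$ of $\binom{\beta+j-1}{j}$. Your explicit bookkeeping is worth keeping, since the paper states the corollary without proof: that $N=|V(G_N)|$, that the $G_N$ are connected and simple, and that $N\to\infty$ along the family, which you use tacitly when you assert $\beta_N\to\infty$.
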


\section{A rank-filtered realization of the full three-variable data: the first nontrivial example $C_3$}
Theorem~\ref{thm:NH} realizes only the chromatic slice. We now retain all three variables for the smallest nontrivial cycle. The construction is elementary and should be viewed in the context of existing algebraic Hilbert realizations of Tutte data\cite{Berget2010}; no priority claim is made for the filtered-squarefree formulation below.

Let $G=C_3$ with edge set $E=\{e_1,e_2,e_3\}$ and define the squarefree edge algebra
\[
 B_G:=\Bbbk[z_1,z_2,z_3]/(z_1^2,z_2^2,z_3^2).
\]
For $A\subseteq E$, write $z_A=\prod_{e_i\in A}z_i$ and let $\rho(A)=\rk_{M(G)}(A)$.

\begin{definition}[Rank filtration]
Set
\[
 F_pB_G:=\operatorname{span}_{\Bbbk}\{z_A:\rho(A)\le p\}\qquad(p\ge0).
\]
\end{definition}

\begin{lemma}\label{lem:filtration}
$F_\bullet B_G$ is a multiplicative filtration:
\[
 F_pB_G\,F_qB_G\subseteq F_{p+q}B_G.
\]
\end{lemma}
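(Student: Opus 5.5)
Since each $F_pB_G$ is spanned by the monomials $z_A$ with $\rho(A)\le p$, and multiplication is bilinear, it is enough to check the inclusion on spanning monomials. So take $z_A\in F_pB_G$ and $z_{A'}\in F_qB_G$, meaning $\rho(A)\le p$ and $\rho(A')\le q$, and show that $z_Az_{A'}\in F_{p+q}B_G$. Once this holds for every such pair, bilinearity extends it to all elements of $F_pB_G$ and $F_qB_G$.

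\emph{Case $A\cap A'\neq\varnothing$.} Some $e_i$ lies in both sets, so the product contains the factor $z_i^2=0$ in $B_G$. Then $z_Az_{A'}=0$, which lies in every $F_{p+q}B_G$.

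\emph{Case $A\cap A'=\varnothing$.} Here $z_Az_{A'}=z_{A\cup A'}$, and it remains to show $\rho(A\cup A')\le p+q$. This is the subadditivity of the matroid rank function. Submodularity gives
\[
 \rho(A\cup A')+\rho(A\cap A')\le\rho(A)+\rho(A'),
\]
and $\rho(A\cap A')=\rho(\varnothing)=0$, so $\rho(A\cup A')\le\rho(A)+\rho(A')\le p+q$.

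If one prefers to avoid matroid axioms, the graphic rank is $\rho(A)=|V|-k(A)$, and the inequality follows from a spanning-forest argument. Take spanning forests $T\subseteq A$ and $T'\subseteq A'$ with $|T|=\rho(A)$ and $|T'|=\rho(A')$. Every edge of $A\cup A'$ lies in the span of $T\cup T'$, so $A\cup A'$ has a spanning forest contained in $T\cup T'$, and hence $\rho(A\cup A')\le|T|+|T'|$. For $C_3$ one can also simply check directly: $\rho(A)=\min(|A|,2)$, which is clearly subadditive.

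The only step needing any thought is this subadditivity of rank, together with remembering that overlapping supports produce zero because $z_i^2=0$. Neither step is a real obstacle; the lemma is essentially the statement that rank is subadditive on disjoint unions. The argument never uses that $G=C_3$, so it holds verbatim for any graph $G$.
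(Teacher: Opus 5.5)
Your proof is correct and follows essentially the same route as the paper: you reduce to squarefree monomials, note that overlapping supports give zero because $z_i^2=0$, and in the disjoint case invoke subadditivity of the matroid rank, $\rho(A\cup A')\le\rho(A)+\rho(A')$. Your additional justifications (deriving subadditivity from submodularity or from a spanning-forest argument, and observing that nothing specific to $C_3$ is used) are valid but do not change the argument.
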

\begin{proof}
If $z_Az_B\ne0$, then $A\cap B=\varnothing$ and $z_Az_B=z_{A\cup B}$. Matroid-rank subadditivity gives
\[
 \rho(A\cup B)\le \rho(A)+\rho(B).
\]
The zero-product case is immediate.
\end{proof}

The associated graded algebra
\[
 \gr_F B_G=\bigoplus_{p\ge0}F_pB_G/F_{p-1}B_G
\]
retains the ordinary degree $|A|$, and is therefore bigraded by $(\rho(A),|A|)$. Define
\[
 H_G(u,s)=\sum_{p,d}\dim_{\Bbbk}(\gr_F B_G)_{p,d}\,u^ps^d.
\]

\begin{proposition}[Rank-filtered squarefree realization]\label{prop:rankfiltered}
For $C_3$,
\[
 \boxed{H_{C_3}(u,s)=1+3us+3u^2s^2+u^2s^3.}
\]
Moreover,
\[
 \boxed{f(C_3;t,x,y)=t^3y^3\,H_{C_3}(t^{-1},x/y).}
\]
\end{proposition}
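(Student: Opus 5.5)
The plan is to compute the bigraded Hilbert polynomial directly from a monomial basis, and then compare it term by term with Proposition~\ref{prop:subgraph}.

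\textbf{Step 1 (monomial basis).} The squarefree monomials $z_A$, $A\subseteq E$, form a $\Bbbk$-basis of $B_G$. Each $F_pB_G$ is by definition spanned by a subset of this basis. Hence each quotient $F_pB_G/F_{p-1}B_G$ has as basis the images of those $z_A$ with $\rho(A)=p$ exactly. These images are homogeneous of ordinary degree $|A|$. Therefore
\[
 H_G(u,s)=\sum_{A\subseteq E}u^{\rho(A)}s^{|A|}.
\]
This is the only structural point. It uses Lemma~\ref{lem:filtration} just to guarantee that $\gr_F B_G$ is an algebra; the dimension count needs only that the filtration is spanned by basis monomials.

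\textbf{Step 2 (evaluation for $C_3$).} In $M(C_3)$ every set of at most two edges is independent, and the full edge set has rank $2$. So the empty set contributes $1$, the three singletons contribute $3us$, the three pairs contribute $3u^2s^2$, and $E$ contributes $u^2s^3$. This gives the first boxed formula.

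\textbf{Step 3 (recovery of $f$).} For a graph with $|V|=n$, the rank is $\rho(A)=n-k(A)$, so $k(A)=n-\rho(A)$. Substituting into Proposition~\ref{prop:subgraph} gives
\[
 f(G;t,x,y)=\sum_A t^{n-\rho(A)}x^{|A|}y^{|E|-|A|}=t^ny^{|E|}\sum_A (t^{-1})^{\rho(A)}(x/y)^{|A|}=t^ny^{|E|}H_G(t^{-1},x/y).
\]
With $n=|E|=3$ this yields the second boxed identity. As a check, it expands to $t^3y^3+3t^2xy^2+3txy^2\cdot(x/y)+tx^3$, that is,
\[
 t^3y^3+3t^2xy^2+3tx^2y+tx^3,
\]
which agrees with $(x+ty)^3+(t-1)x^3$ from the $C_n$ formula.

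The main obstacle is Step 1: one must justify that the associated graded has the claimed bigraded dimensions. This reduces to the filtration being spanned by a subset of a fixed homogeneous monomial basis, which I would state explicitly. Everything else is bookkeeping.
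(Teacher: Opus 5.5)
Your proposal is correct and follows essentially the same route as the paper. The monomials $z_A$ form a basis adapted to the rank filtration, so $H_{C_3}(u,s)=\sum_{A}u^{\rho(A)}s^{|A|}$, and then $k(A)=|V|-\rho(A)$ together with the spanning-subgraph expansion recovers $f$. Your only additions are stating the recovery identity $f(G;t,x,y)=t^{|V|}y^{|E|}H_G(t^{-1},x/y)$ for a general graph before specializing, and the cross-check against Negami's cycle formula; the paper simply expands the $C_3$ case directly.
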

\begin{proof}
The subset ranks are
\[
 \rho(\varnothing)=0,\qquad \rho(A)=1\ (|A|=1),\qquad \rho(A)=2\ (|A|=2,3).
\]
The squarefree monomials $z_A$ form a basis indexed by subsets, and each contributes one homogeneous basis element to the associated graded algebra. Hence
\[
 H_{C_3}(u,s)=1+3us+3u^2s^2+u^2s^3.
\]
Since $k(A)=|V|-\rho(A)=3-\rho(A)$,
\begin{align*}
 t^3y^3H_{C_3}(t^{-1},x/y)
 &=t^3y^3+3t^2xy^2+3tx^2y+tx^3\\
 &=\sum_{A\subseteq E}t^{k(A)}x^{|A|}y^{3-|A|}
 =f(C_3;t,x,y).
\end{align*}
\end{proof}

\begin{remark}[Circuit dependence in the multiplication]
The construction carries genuine multiplicative information. In the associated graded algebra,
\[
 \overline z_1\,\overline z_2=\overline{z_1z_2}\ne0,
\]
whereas $\rho(\{1,2,3\})=2<\rho(\{1,2\})+\rho(\{3\})=3$ forces
\[
 \overline{z_1z_2}\,\overline z_3=0.
\]
Thus the fact that the third edge closes a cycle without increasing matroid rank appears as a vanishing product in the associated graded algebra. This is more than a coefficient-by-coefficient encoding of $f(C_3;t,x,y)$.
\end{remark}

\section{Comparison of the principal families}
\begin{table}[ht]
\centering
\caption{Broken-circuit $h$-polynomials of representative graph families}
\begin{tabular}{@{}ll@{}}
\toprule
Graph family & $h_G(z)$ \\
\midrule
$C_n$ & $1+z+\cdots+z^{n-2}$ \\
$W_n=K_1+C_n$ & $(1+z)^n-z^{n-1}-z^n$ \\
$K_n$ & $\prod_{j=1}^{n-2}(1+jz)$ \\
$K_{2,n}$ & $(1+z)^n-z$ \\
$K_{3,n}$ & $z^2-3z(1+z)^n+(1+z)(1+2z)^n$ \\
\bottomrule
\end{tabular}
\end{table}

Unsigned Stirling numbers of the first kind occur for complete graphs, whereas Stirling numbers of the second kind occur naturally for complete bipartite graphs. For strongly regular graphs, $h_1$ detects the cycle rank and $h_2$ records a triangle correction. For large-girth graphs, the low-degree $h$-coefficients have a binomial form and the first deviation detects the number of shortest cycles.

\section{Conclusion}
We have organized an explicit transformation from the chromatic slice $f(G;q,-1,1)$ of Negami's polynomial, through the characteristic polynomial of the graphic matroid, to the $h$-polynomial and Hilbert series of the broken-circuit Stanley--Reisner ring. The terminology ``Negami--Hilbert correspondence'' is used only to name this composite map; its constituent chromatic/NBC/Hilbert identities belong to established theory. The same formula uniformly yields the computations for cycles, wheels, complete and complete bipartite graphs, strongly regular graphs, and large-girth/Ramanujan graphs. In addition, the rank-filtered squarefree edge algebra of $C_3$ provides a concrete multiplicative realization from whose bigraded Hilbert polynomial the full three-variable Negami polynomial is recovered.


\begin{thebibliography}{99}
\bibitem{Negami1987}
S. Negami, Polynomial invariants of graphs, \textit{Trans. Amer. Math. Soc.} \textbf{299} (1987), 601--622.
\bibitem{Oxley1989}
J. G. Oxley, A note on Negami's polynomial invariants for graphs, \textit{Discrete Math.} \textbf{76} (1989), 279--281.
\bibitem{NegamiKawagoe1995}
S. Negami and K. Kawagoe, Polynomial invariants of graphs with state models, \textit{Discrete Appl. Math.} \textbf{56} (1995), 323--331.
\bibitem{Whitney1932}
H. Whitney, A logical expansion in mathematics, \textit{Bull. Amer. Math. Soc.} \textbf{38} (1932), 572--579.
\bibitem{BrylawskiOxley1981}
T. Brylawski and J. Oxley, The broken-circuit complex: its structure and factorizations, \textit{European J. Combin.} \textbf{2} (1981), 107--121.
\bibitem{BjornerZiegler1991}
A. Bj\"orner and G. M. Ziegler, Broken circuit complexes: factorizations and generalizations, \textit{J. Combin. Theory Ser. B} \textbf{51} (1991), 96--126.
\bibitem{ProudfootSpeyer2006}
N. Proudfoot and D. Speyer, A broken circuit ring, \textit{Beitr\"age Algebra Geom.} \textbf{47} (2006), 161--166.
\bibitem{Llamas2010}
A. Llamas, J. Mart\'{i}nez-Bernal and C. Merino, On the broken-circuit complex of graphs, \textit{Comm. Algebra} \textbf{38} (2010), 1847--1854.
\bibitem{Berget2010}
A. Berget, Products of linear forms and Tutte polynomials, \textit{European J. Combin.} \textbf{31} (2010), 1924--1935.
\bibitem{StanleyCCA}
R. P. Stanley, \textit{Combinatorics and Commutative Algebra}, 2nd ed., Birkh\"auser, 1996.
\bibitem{OxleyBook}
J. Oxley, \textit{Matroid Theory}, 2nd ed., Oxford University Press, 2011.
\bibitem{LPS}
A. Lubotzky, R. Phillips and P. Sarnak, Ramanujan graphs, \textit{Combinatorica} \textbf{8} (1988), 261--277.
\end{thebibliography}
\end{document}